\newcommand{\DEF}{:=}
\newcommand{\D}{\displaystyle}
\newcommand{\DIS}{\displaystyle}
\newcommand{\beq}{\begin{eqnarray}}
\newcommand{\eeq}{\end{eqnarray}}
\newcommand{\beqs}{\begin{eqnarray*}}
\newcommand{\eeqs}{\end{eqnarray*}}
\newcommand{\vsp}{\vspace{0.1in}}
\newcommand{\nnu}{\nonumber}
\newcommand{\goto}{\rightarrow}
\newcommand{\by}{\mathbf{y}}
\newcommand{\bx}{\mbox{\boldmath $x$}}
\newcommand{\ep}{\epsilon}
\newcommand{\field}[1]{\mathbb{#1}} 
\newtheorem{theorem}{Theorem}[section]
\begin{document}

\markboth{Miao-jung Y. Ou}{Nonstandard Pad\'{e} approximants for effective properties}
\title{On nonstandard Pad\'{e} approximants suitable for effective properties of two-phase composite materials}

\author{Miao-jung Y. Ou\\UT Joint Institute for Computational Sciences, Oak Ridge National Laboratory,\\
Oak Ridge, TN 37831, USA\footnote{Building 6012, MS 6367, 1 Bethel Valley Road, Oak Ridge, TN 37831, USA}\\
mou@utk.edu}

\maketitle
\begin{abstract}
This paper investigates existence of the nonstandard Pad\'{e} approximants introduced by Cherkaev and Zhang in~\cite{Zh-Ch-09} for approximating the spectral function of composites from effective properties at different frequencies. The spectral functions contain microstructure information.  Since this reconstruction problem is ill-posed ~\cite{Elena-01}, the well-performed Pad\'{e} approach is noteworthy and deserves further investigations. In this paper, we validate the assumption that the effective dielectric component of interest of all binary composites can be approximated by Pad\'{e} approximants whose denominator has nonzero power one term. We refer to this as the nonstandard Pad\'{e} approximant, in contrast to the standard approximants whose denominator have nonzero constant terms. For composites whose spectral function assumes infinitely many different values such as the checkerboard microstructure, the proof is carried by using classic results for Stieltjes functions. For those with spectral functions having only finitely many different values, we prove the results by utilizing a special product decomposition of the coefficient matrix of the Pad\'{e} system. The results in this paper can be considered as an extension of the Pad\'{e} theory for Stieltjes functions whose spectral function take infinitely many different values to those taking only finitely many values. In the literature, the latter is usually excluded from the definition of Stieltjes functions because they correspond to rational functions, hence convergence of their Pad\'{e} approximants is trivial. However, from an inverse problem point of view, our main concern is the existence of the nonstandard Pad\'{e} approximants, rather than their convergence.  The results in this paper provide a  mathematical foundation for applying the Pad\'{e} approach for reconstructing the spectral functions of composites whose microstructure is not {\it a priori} known.
\end{abstract}

{Keywords: Effective properties of composites; Microstructure; Spectral functions; Nonstandard Pad\'{e} approximants; Stieltjes functions; Inverse homogenization.}\\
{AMS Subject Classification: 30E10}

\section{Introduction}	
In the seminal paper by Golden and Papanicolaou\cite{GoPa-83}, components of the effective dielectric matrix of a two-pahse composite with statistically homogeneous microstructure and isotropic constituents are represented as  Stieltjes integrals with positive Borel measure supported in $[0,1]$. We refer to this integral representation formula as IRF throughout the paper. In the IRF, the contrast of the constituents, represented as the ratio (denoted by $h$) of the two dielectric constants, stays in the integrand while the information of microstructure is contained in the positive Borel measure. To be more specific, it is shown in \cite{GoPa-83} that for $s:=\frac{1}{1-h}$ outside $[0,1]$ on the complex plane, the $(i,k)$-component of the effective dielectric tensor, $i,k=1,2,3$ of a dielectric composite, denoted by $\ep_{ik}^*$ can be represented as 
\[
\ep_{ik}^*=\ep_1 \left(\delta_{ik} - \int_0^1 \frac{1}{s-z}d\mu_{ik}(z)\right)
\]  
where $h:=\frac{\ep_2}{\ep_1}$, $\delta_{ik}$ is Kronecker delta, and $d\mu_{ik}(z)$ is a positive Borel measure. Throughout the paper, we adapt the notation $:=$ and $=:$ such that
\beqs
A:=B \Leftrightarrow \mbox{ $A$ is defined by $B$}\\
A=:B \Leftrightarrow\mbox{ $B$ is defined by $A$} 
\eeqs

Following the convention in literature, we introduce $F(s)$ function 
\beq
F_{ik}(s)\DEF \delta_{ik} - \frac{\ep_{ik}^*}{\ep_1}=\int_0^1 \frac{1}{s-z}d\mu_{ik}(z)
\label{F_function}
\eeq
Note that $h$ is complex-valued for lossy materials. Applying geometric series expansion of $\frac{1}{s-z}$ around $s=\infty$ (corresponding to a homogeneous material), (\ref{F_function}) becomes
\[
F_{ik}(s)=\sum_{m=0}^\infty \frac{c_m}{s^{m+1}} 
\label{moment_exp}
\]
with $c_m$ being the $m$-th moment of the measure, i.e. $c_m\DEF \int_0^1 z^m \mu_{ik}(dz)$.

It is also shown in \cite{GoPa-83} that $c_m$, $m=0,1,2,\cdots$, is related to the $(m+1)$-point correlation function of the characteristic function of the region occupied by one of the constituents. This link has been exploited for estimating the bounds on effective dielectric tensor by using information on $m$-point correlation functions such as volume fraction ($m=1$) and two-point correlation function. This approach of estimating bounds relies on the observation from (\ref{F_function}) that the $F_{ik}(s)$ is an analytic function for $s\in\field{C}\setminus [0,1]$, where $\field{C}$ is the complex plane.  This provides a mathematical scheme for retrieving the moments $c_m$ when the dielectric constant of the one of the constituents varies with frequency. In \cite{Cher-Ou-08}, it was demonstrated analytically and numerically that the construction of moments of the measure from data of $\ep_{ik}^*$ at various frequencies is a stable process, unlike the ill-posed process of reconstructing the measure itself.  In that paper, the reconstruction of moments was carried out by solving a truncated linear system if all the data points $s_k$ are outside the unit circle and by a Gaussian quadrature method. In all the tested numerical examples, the first 5-10 moments can be reconstructed with high accuracy by both methods. These moments contain the microstructural information and the resulting truncated series also provide a very accurate way for calculating the effective dielectric constant for all frequencies(extrapolation) \cite{Cher-Ou-08}. In \cite{Zh-Ch-09}, instead of reconstructing the coefficients of the truncated Stieltjes series, the coefficients of the nonstandard Pad\'{e} approximants of the series in (\ref{moment_exp}) was reconstructed from data of $\ep^*_{ik}$ at different frequencies. This method is able to reconstruct the first 10-15 moments with high accuracy and the resulting Pad\'{e} appriximant  provides a very accurate way for calculating the effective dielectric constant for all frequencies(extrapolation). It is worth noticing that all the real-valued poles of the resulting Pad\'{e} approximant lie along the graph of the true spectral functions in all the tested numerical examples. To be more specific, in \cite{Zh-Ch-09}, Pad\'{e} approximants for the Stieltjes integral in (\ref{F_function}) with the following form:
\beq
F(s)  \simeq\frac{a(s)}{b(s)}=\frac{a_0 +a_1s +a_2s^2 + \cdot\cdot\cdot +a_ps^p}
         {b_0 +b_1 s +b_2s^2 + \cdot\cdot\cdot + b_qs^q},\, b_1\ne 0.
\label{F_pade}
\eeq 
was used for recovering the moments. They assume $F(s)$ has at least one pole and normalize $b_1$ to be 1, unlike the standard Pad\'{e} approximants for which $b_0$ is normalized to 1. We refer to (\ref{F_pade}) as \textbf{nonstandard Pad\'{e} apprximants}. The main reason prompting the choice of $b_1\ne 0$ is beecause for a checkerboard microstructure, the spectral function has a pole at $s=0$. For ease of notation, we will replace $\ep^*_{ik}$ with $\ep^*$ hereafter. $\mu$ is assumed to take at least two different values because otherwise  $F$ will be identically zero as implied by (\ref{F_function}). For $N$ different frequencies $\omega_1,\cdots, \omega_N$, there are $N$ data pairs $(s_k, d_k)$, where $s_k=\frac{1}{1-\ep_2(\omega_k)/\ep_1(\omega_k)}$ and $d_k=\ep^*(s_k)$, $k=1,\cdots,N$. The following  linear system of equations for $a_0, \cdots, a_p, b_0, b_2, \cdots, b_q$, 
\beq
d_k=\frac{a_0 +a_1s_k +a_2s_k^2 + \cdot\cdot\cdot +a_ps_k^p}
         {b_0 + s_k +b_2s_k^2 + \cdot\cdot\cdot + b_qs_k^q}, p+q+1\le N
\eeq
was solved by formulating it as a least-square minimization problem subject to the constraints
\beq
0\le A_n<1,\, 0\le s_n<1,\, 0<\sum_n A_n<1
\label{constraints}
\eeq
for the residues $A_n$ and poles $s_n$ defined by the partial fraction decomposition of the nonstandard Pad\'{e} approximant
\[
\frac{a_0 +a_1s +a_2s^2 + \cdot\cdot\cdot +a_ps^p}
         {b_0 +b_1 s +b_2s^2 + \cdot\cdot\cdot + b_qs^q}=: \sum_{n=1}^q \frac{A_n}{s-s_n}
\]
 The moments were then computed  by applying partial fraction decomposition to the Pad\'{e} approximants and by correctly combining the poles and residues.  See \cite{Zh-Ch-09,Ch-Zh-09} for details.
\vsp

In~\cite{En-05}\cite{TT-97}\cite{TT-98}, the fact that a Stieltjes function can be bounded up and below by standard Pad\'{e} approximants was used for deriving bounds on the effective dielectric constants.
\vsp

The main purpose of this paper is twofold. The first is to show that $F(s)$ can always be approximated by a Pad\'{e} approximant of the form in (\ref{F_pade}) for $s\in \field{C}\setminus[0,1]$, i.e. to show that we can always normalize $b_1$ to be 1 and to show this special form of Pad\'{e} approximant has the accuracy-through-order property that will be defined in the next section.  The second is to justify the constraints shown in (\ref{constraints}).
\vsp

This paper is organized as follows. We first define an auxiliary function $f(\xi)$ and prove that it has accuracy-through-order standard Pad\'{e} approximants in Section \ref{pade_f}. In the same section, we also prove properties of the poles of the Pad\'{e} approximants. Using these results, we show in Section \ref{main} that $F(s)$ always has accurate-through-order Pad\'{e} approximant of the form as in (\ref{F_pade}) for appropriate $p$ and $q$. The major technique in the proof is fractorizing the coefficient matrix of the system of Pad\'{e} equations into products of Vandermonde matrices and a diagonal matrix. In Section \ref{conclusion}, we discuss the relevance of this work in materials science for binary composites.
\section{Existence of Pad\'{e} approximants}
Since the representation formula for $F(s)$ is a Stieltjes integral with non-decreasing distribution function $\mu$ and is analytic for $s \in\field{C}\setminus [0,1]$, we may identify it with a Stieltjes function $f(\xi)$ \cite{Baker} by rewriting $F(s)$ in terms of a new variable $\xi:=-1/s$  
\beq
F(s)=:G(\xi)=-\xi\int_0^1 \frac{d\mu(z)}{1+z\xi}=:-\xi\cdot f(\xi)
\label{def_f_xi}
\eeq
We refer to $f(\xi)$ as the \textbf{auxiliary function} and would like to remark that the function $\mu(z)$ can take finitely many or infinitely many different values, i.e. the definition of Stieltjes function we use here does not include the restriction on the number of different values $\mu(z)$ takes. Note that the Stieltjes function $f(\xi)$ is analytic for $\xi\in\field{C}\setminus (-\infty,-1]$ and has a power series expansion valid in $|\xi|<1$, which reads
\beq
f(\xi)=\DIS\sum_{n=0}^{\infty} \mu_n (-\xi)^n. \label{power_expan_f}
\eeq 
For later use, we also include the expansion of $G$ in the same region on the $\xi$-plane
\beq
G(\xi)=\DIS\sum_{n=0}^{\infty} \mu_n (-\xi)^{n+1}. \label{power_expan_G}
\eeq
Here we use $-\xi$ because it is easier to work with the positive coefficients $\{\mu_n\}$, which can be easily verified as  the moments of the Borel measure $\mu$ in (\ref{def_f_xi}), $n=0,1,2,\cdots$. 
\subsection{\label{pade_f}Existence of standard Pad\'{e} approximants for the auxiliary function $f(\xi)$}
Recall the standard $[L/M]$ Pad\'{e} approximant for $f(\xi)$ in (\ref{power_expan_f}) is a rational function of the following form
\[
[L/M](-\xi):=\DIS\frac{a_0+a_1(-\xi)+\cdots+a_L(-\xi)^L}{b_0
+b_1(-\xi)+\cdots+b_M(-\xi)^M}
\]  
such that $b_0$ can be normalized to be 1 and the rest of the coefficients satisfy the system of Pad\'{e} equations \cite{Baker} 
\beq
&&\left[
\begin{array}{cccc}
\mu_{L-M+1} & \mu_{L-M+2} & \cdots & \mu_L\\
\mu_{L-M+2} & \mu_{L-M+3} & \cdots & \mu_{L+1}\\
\vdots & \vdots & \ddots &\vdots\\
\mu_{L} & \mu_{L+1} & \cdots & \mu_{L+M-1}
\end{array}
\right]
\left[
\begin{array}{c}
b_M\\b_{M-1}\\ \vdots \\ b_1
\end{array}
\right]
=-\left[
\begin{array}{c}
\mu_{L+1}\\\mu_{L+2}\\ \vdots \\ \mu_{L+M}
\end{array}
\right]
\label{Pade_eqns_b}
\eeq
\beq
\left\{
\begin{array}{l}
a_0=\mu_0\\
a_1=\mu_1+b_1 \mu_0\\
a_2=\mu_2+b_1\mu_1+b_2\mu_0\\
\vdots\\
a_L=\mu_L+\DIS\sum_{k=1}^{\min(L,M)} b_k \mu_{L-k}
\end{array}
\right.
\label{Pade_eqns_a}
\eeq 
For consistency, we define $\mu_j=0$ for $j<0$. The $[L/M]$ Pad\'{e} approximant is said to have the accuracy-through-order property if it satisfies 
\beq
f(\xi)-[L/M]=O(\xi^{L+M+1}).
\label{a-t-o-property}
\eeq
for $|\xi|$ within radius of convergence.
We would like to remark that for any given function $f(z)$, polynomials $p^{[L/M]}(z)$, $q^{[L/M]}(z)$ of degrees $L$, $M$, respectively, can always be found so that 
\beq
q^{[L/M]}(z)f(z)-p^{[L/M]}(z)=O(z^{L+M+1})
\label{p-q}
\eeq
However, it is well known that (\ref{p-q}) does not necessarily imply $p^{[L/M]}(z)/q^{[L/M]}(z)=f(z)+O(z^{L+M+1})$ \cite{Baker}. Following Baker, we say that the Pad\'{e} approximant does not exist if the accuracy-through-order requirement is not satisfied. On the other hand, it can be shown that \cite{Baker}(p.7) the rational function $\DIS\frac{P^{[L/M]}(-\xi)}{Q^{[L/M]}(-\xi)}$
with $P^{[L/M]}$ and $Q^{[L/M]}$ defined as
\beq
P^{[L/M]}(-\xi)=&&\left|
\begin{array}{cccc}
\mu_{L-M+1} & \mu_{L-M+2} & \cdots & \mu_{L+1}\\
\mu_{L-M+2} & \mu_{L-M+3} & \cdots & \mu_{L+2}\\
\vdots & \vdots & \ddots &\vdots\\
\mu_{L-1} & \mu_{L} & \cdots & \mu_{L+M-1}\\
\mu_{L} & \mu_{L+1} & \cdots & \mu_{L+M}\\
\DIS\sum_{i=0}^{L-M}\mu_i (-\xi)^{M+i} & \DIS\sum_{i=0}^{L-M+1}\mu_i (-\xi)^{M+i-1} & \cdots & \DIS\sum_{i=0}^{L}\mu_i (-\xi)^{i}
\end{array}
\right|
\label{Pade_P}
\eeq
\beq
Q^{[L/M]}(-\xi)=&&\left|
\begin{array}{ccccc}
\mu_{L-M+1} & \mu_{L-M+2} & \cdots & \mu_{L}&\mu_{L+1}\\
\mu_{L-M+2} & \mu_{L-M+3} & \cdots &\mu_{L+1} & \mu_{L+2}\\
\vdots & \vdots & \ddots &\vdots\\
\mu_{L-1} & \mu_{L} & \cdots & \mu_{L+M-2} & \mu_{L+M-1}\\
\mu_{L} & \mu_{L+1} & \cdots & \mu_{L+M-1} & \mu_{L+M}\\
(-\xi)^{M} & (-\xi)^{M-1} &\cdots& (-\xi)& 1
\end{array}
\right|
\label{Pade_Q}
\eeq
is an accuracy-through-order Pad\'{e} approximant of $f(\xi)$ provided $Q^{[L/M]}(0)\ne 0$. Note that $Q^{[L/M]}(0)$(Hankel determinant) is exactly the determinant of the coefficient matrix of (\ref{Pade_eqns_b}). Hence the condition $Q^{[L/M]}(0)\ne 0$ guarantees the accuracy-through-order property of the $[L/M]$ approximant \cite{Baker}(p.21). If $Q^{[L/M]}(0)=0$, (\ref{Pade_eqns_b}) can have infinitely many solutions or fail to have any solution, i.e. the $[L/M]$ Pad\'{e} approximant may not exist. We would like to remark that $Q^{[L/M]}(0)=0$ does not imply non-existence of accuracy-through-order Pad\'{e} approximants. For example, for a rational function with degree $p$ numerator and degree $q$ denominator, it is known that $Q^{[L/M]}(0)=0$ if $L> p$ and $M> q$ but obviously, the Pad\'{e} approximants are the rational function itself for these cases.
\vsp

Because the numerical algorithm in \cite{Zh-Ch-09} is based on the reconstruction of the Pad\'{e} approximant of $F(s)$, rather than $F(s)$ itself, it is necessary to prove the existence of Pad\'{e} approximant for $G(\xi)$ defined in (\ref{def_f_xi}). To achieve this, we first show the existence of standard Pad\'{e} approximants for the Stieltjes function $f(\xi)$ in the following theorem. 
\vsp

\begin{theorem}
\label{f-thm}
The $[L/M]$ Pad\'{e} approximant for $f(\xi)$ exists for all $L-M+1\ge 0$, $L\ne -1$ and $M\ge 0$ such that $b_0=1$.
\label{thm_pade_exist}
\end{theorem}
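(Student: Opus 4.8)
Write $\Delta_{L,M}:=Q^{[L/M]}(0)=\det\bigl[\mu_{L-M+i+j-1}\bigr]_{i,j=1}^{M}$ for the determinant of the coefficient matrix of the Pad\'e system (\ref{Pade_eqns_b}). By the discussion preceding the theorem, whenever $\Delta_{L,M}\ne0$ the determinant formulas (\ref{Pade_P})--(\ref{Pade_Q}) already furnish an accuracy-through-order $[L/M]$ approximant, and dividing numerator and denominator by the constant $\Delta_{L,M}$ normalizes $b_0=1$; the case $M=0$ is trivial because the denominator is then the constant $1$ (and the degenerate case $L=-1$ is excluded by hypothesis). So it suffices to show, for every admissible pair $(L,M)$ with $M\ge1$, that either $\Delta_{L,M}\ne0$ or else $(L,M)$ is already large enough for the approximant to exist for another reason. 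The argument splits according to whether the distribution function $\mu$ has infinitely many or only finitely many points of increase.

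First suppose $\mu$ has infinitely many points of increase. Here I would invoke the classical positive-definiteness of the Hankel forms of a Stieltjes moment sequence \cite{Baker}: the entry $\mu_{L-M+i+j-1}$ is the $(i+j-2)$-th moment of the positive measure $z^{L-M+1}\,d\mu(z)$ on $[0,1]$, and this measure still has infinitely many points of increase, since multiplying by the power $z^{L-M+1}$ can delete at most the single atom at $z=0$ (this is where $L-M+1\ge0$ enters). Consequently the matrix in (\ref{Pade_eqns_b}) is symmetric positive definite, so $\Delta_{L,M}>0$ and the $[L/M]$ approximant with $b_0=1$ exists for every admissible $(L,M)$.

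Next suppose $\mu$ has exactly $r\ge1$ points of increase $0\le z_1<\dots<z_r\le1$, with weights $w_1,\dots,w_r>0$. Then $f(\xi)=\sum_{j=1}^{r}w_j/(1+z_j\xi)$ is a rational function whose lowest-terms denominator is $q_0(\xi)=\prod_{z_j>0}(1+z_j\xi)$, so $q_0(0)=1$ and $\deg p_0\le\deg q_0=:r'$, where $r'$ equals $r$ or $r-1$ according as $z=0$ lies outside or inside $\mathrm{supp}\,\mu$. The main computation is the product decomposition
\[
\bigl[\mu_{L-M+i+j-1}\bigr]_{i,j=1}^{M}
  = V^{\top}\,\mathrm{diag}\bigl(w_k z_k^{L-M+1}\bigr)_{k=1}^{r}\,V,
  \qquad V=\bigl(z_k^{\,j-1}\bigr)_{1\le k\le r,\ 1\le j\le M},
\]
a product of a rectangular Vandermonde matrix, a nonnegative diagonal factor $D$, and the transposed Vandermonde matrix. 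Since the $z_k$ are distinct, $V$ has full rank $\min(r,M)$, and a short positivity argument for the quadratic form $x\mapsto x^{\top}V^{\top}DVx$ shows that $\Delta_{L,M}>0$ exactly when $M\le\rho$, where $\rho$ is the number of nodes on which the diagonal is nonzero: all $r$ nodes when $L=M-1$, and only the $r'$ positive nodes when $L\ge M$ (because $z_k^{L-M+1}=0$ at $z_k=0$ then). In particular $\rho\ge r'$ in every case, so for $M\le\rho$ the approximant with $b_0=1$ exists via (\ref{Pade_P})--(\ref{Pade_Q}).

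It remains to treat $M>\rho$, where $\Delta_{L,M}=0$. But then $M>\rho\ge r'=\deg q_0$ and $L\ge M-1\ge\rho\ge r'\ge\deg p_0$, so the pair $(L,M)$ dominates the degree type of $f$; hence $f$ is itself a rational function of type $(L,M)$, it satisfies $f-f=0=O(\xi^{L+M+1})$, and its lowest-terms form has denominator $q_0$ with $q_0(0)=1$. Thus $[L/M]=f$ is the unique accuracy-through-order approximant, realized with $b_0=1$, which completes the proof. I expect the only delicate point to be this last bookkeeping in the finite-support case: one has to verify that the Hankel determinant vanishes only when $L$ and $M$ are already large enough for $[L/M]$ to collapse onto $f$, and one must keep track of the special node $z=0$ --- the checkerboard pole --- throughout, since it counts toward $\rho$ when $L=M-1$ but not when $L\ge M$.
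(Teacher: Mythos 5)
Your proof is correct and follows essentially the same route as the paper: the same $V^{\top}\Lambda V$ Vandermonde--diagonal factorization of the Hankel matrix in the finite-support case (with the node $z=0$ dropping out of the diagonal exactly when $L-M+1>0$), positive definiteness of the Stieltjes Hankel forms in the infinite case (the content of Baker's Theorem 5.2.1, which the paper simply cites), and collapse of $[L/M]$ onto the rational function $f$ itself once the Hankel determinant vanishes (the paper's appeal to Baker's Theorem 1.4.4). The only omission is the trivial case $\mu$ constant, i.e.\ $f\equiv 0$, which the paper dispatches in one line.
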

\begin{proof}
We first consider the case where the non-decreasing function $\mu(z)$ in (\ref{def_f_xi}) takes infinitely many different values. Using Theorem 5.2.1 in \cite{Baker}, which states that $Q^{[L/M]}(0)\ne 0$ for all $L-M+1\ge 0$ and $M\ge 1$, the existence of $[L/M]$ approximants follows directly. For $M=0$, $[L/0]$ clearly exists because $[L/0]=\sum_{j=0}^L \mu_j (-\xi)^j$.
\vsp

For the case in which $\mu(z)$ takes $n+1$ different values for some non-negative finite integer $n$, we prove the theorem as follows. Let $z_1,z_2,\cdots z_n$ be the $n$ different points on $[0,1]$ such that $0\le z_1 <z_2 <z_3<\cdots<z_n$ and $\mu(z)$ changes values only at these points, i.e. $d\mu(z)$ is a finite sum of Dirac measure sitting at $z_i$ with strength $\lambda_i$.
\vsp

\underline{\textbf{Case 1: $\mathbf{n=0}$} }
If $n=0$, then $f(\xi)\equiv 0$ so $[L/M]=0$ for all $L-M+1\ge 0$ and $M\ge 0$. 
\vsp

\underline{\textbf{Case 2: $\mathbf{n=1}$} } If $z_1=0$, i.e. $f(\xi)$ takes two different values and the value change occurs at $z_1$ by the amount of $\lambda_1$, then $f(\xi)\equiv\lambda_1>0$. Clearly, the Pad\'{e} approximant exits. If $z_1>0$, $f(\xi)=\frac{\lambda}{1+z_1\xi}$ so $[L/M]=\frac{\lambda}{1+z_1\xi}$ for all $L-M+1\ge 0$ and $M\ge 1$ by Theorem 1.4.4 in \cite{Baker} and $[L/0]$ clearly exists because $[L/0]=\sum_{j=0}^L \mu_j (-\xi)^j$.  
\vsp

\underline{\textbf{Case 3: $\mathbf{n\ge 2}$}}  Denote the amount of value change at $z_i$ by $\lambda_i>0$, $i=1,2,\cdots,n$.  With this, $f(\xi)$ can be written as
\beqs
f(\xi)=\DIS\sum_{i=1}^n \frac{\lambda_i}{1+z_i\xi}
\eeqs   
Suppose $z_1 \ne 0$, i.e.  $z_i, i=1\sim n$ are all non-zero. Then $f(\xi)$ is a rational function with degree $n-1$ numerator and degree $n$ denominator. By the characterization theorem of rational function (Theorem 1.4.4) in \cite{Baker}, we know that $[L/M](-\xi)=f(\xi)$ if $L\ge n-1$ and $M\ge n$, i.e. the Pad\'{e} approximant of a rational function is itself if both $L$ and $M$ are large enough. Also, the constant term of the denominator is obviously non-zero. This takes care of those $[L/M]$ such that $L-M+1\ge 0$ and $M\ge n$.
\vsp

To complete the proof for $z_1\ne 0$, we consider $[L/M]$ such that $L-M+1\ge 0$ and $1\le M < n$.  For this case, we exploit the the structure of moments 
$
\mu_0=\sum_{i=1}^n \lambda_i; \,\mu_k=\sum_{i=1}^n \lambda_i z_i^k,\,k=1,2,3\dots
$
to rewrite the coefficient matrix of (\ref{Pade_eqns_b}) as
\beq
\left[
\begin{array}{llll}
\DIS\sum_{i=1}^n \lambda_i z_i^{L-M+1} & \DIS\sum_{i=1}^n \lambda_i z_i^{L-M+2} & \cdots & \DIS\sum_{i=1}^n \lambda_i z_i^{L}\\
\DIS\sum_{i=1}^n \lambda_i z_i^{L-M+2}& \DIS\sum_{i=1}^n \lambda_i z_i^{L-M+3}& \cdots &\DIS\sum_{i=1}^n \lambda_i z_i^{L+1}\\
\vdots & \vdots & \ddots &\vdots\\
\DIS\sum_{i=1}^n \lambda_i z_i^{L} & \DIS\sum_{i=1}^n \lambda_i z_i^{L+1} & \cdots & \DIS\sum_{i=1}^n \lambda_i z_i^{L+M-1}
\end{array}
\right]
={\mathbf{V}}^T \mathbf{\Lambda} \mathbf{V},
\label{decomp_1}
\eeq   
where
\beq
\mathbf{V}:=\left[
\begin{array}{lllll}
1&z_1&z_1^2&\cdots&z_1^{M-1}\\
1&z_2&z_2^2&\cdots&z_2^{M-1}\\
\vdots&\vdots & \vdots & \ddots &\vdots\\
1&z_M&z_M^2&\cdots& z_M^{M-1}\\
\vdots&\vdots & \vdots & \ddots &\vdots\\
1&z_n&z_n^2&\cdots& z_n^{M-1}
\end{array}
\right]_{n\times M}\\
\mathbf{\Lambda}:=\left[
\begin{array}{lllll}
\lambda_1 z_1^{L-M+1}&0&0&\cdots&0\\
0& \lambda_2 z_2^{L-M+1}&0&\cdots&0\\
0&0& \lambda_3 z_3^{L-M+1}&\cdots&0\\
\vdots&\vdots & \vdots & \ddots &\vdots\\
0&0&0&\cdots& \lambda_n z_n^{L-M+1}
\end{array}
\right]_{n\times n}
\eeq
The superscript $T$ denotes matrix transpose. Note that the null space $\mathcal{N}({\mathbf{V}})=\mathbf{0}\in \field{R}^M$ because $\mathbf{V}$ is a rectangular Vandermonde matrix with $n$ rows and $M$ columns such that $n> M$. Let $\bx$ be a vector in $\field{R}^M$ so that ${\mathbf{V}}^T \mathbf{\Lambda} \mathbf{V}\bx=\mathbf{0}$. Consider the inner product $(\bx,\by):=\sum_{k=1}^n x_i y_i$ for $\bx=(x_1,\cdots, x_n)^T$,  $\by=(y_1,\cdots, y_n)^T$, then
\[
0=({\mathbf{V}}^T \mathbf{\Lambda} \mathbf{V}\bx,\bx)=(\mathbf{\Lambda} \mathbf{V}\bx,{\mathbf{V}}\bx) \ge  \min_{i=1\sim n} \lambda_i z_i^{L-M+1} ( {\mathbf{V}}\bx,{\mathbf{V}}\bx)
\] 
Since $\D\min_{i=1\sim n} \lambda_i z_i^{L-M+1} >0$, we have ${\mathbf{V}}\bx=\mathbf{0}$. Hence $\bx=\mathbf{0}\in \field{R}^M$. This shows that the coefficient matrix in (\ref{Pade_eqns_b}) is non-singular so the $[L/M]$ approximant exists for $L-M+1\ge 0$ and $1\le M< n$ if $z_i\ne 0$ for every $i$.  
\vsp
\begin{table}
\caption{\label{large_L_M}Schematic description for case $n\ge 2$ and $z_1=0$}
\begin{center}
\begin{tabular}[ht]{l|cccccccccc}
$M\setminus L$ &0 &1 &2 &$\cdots$ &$n-3$ &$n-2$&$n-1$&$n$&$n+1$ &$\cdots$\\ \hline 
0 &$\bullet$&$\bullet$&$\bullet$&$\cdots$&$\bullet$&$\bullet$&$\bullet$&$\bullet$&$\bullet$&$\cdots$\\
1 &$\circ$&$\bullet$&$\bullet$&$\cdots$&$\bullet$&$\bullet$&$\bullet$&$\bullet$&$\bullet$&$\cdots$\\
2 & &$\circ$&$\bullet$&$\cdots$&$\bullet$&$\bullet$&$\bullet$&$\bullet$&$\bullet$&$\cdots$\\
3 & & &$\circ$&$\cdots$&$\bullet$&$\bullet$&$\bullet$&$\bullet$&$\bullet$&$\cdots$\\
$\vdots$ & & & & $\ddots$ &$\vdots$&$\vdots$&$\vdots$&$\vdots$&$\vdots$&$\ddots$\\
$n-2$ & & & &  &$\circ$&$\bullet$&$\bullet$&$\bullet$&$\bullet$&$\cdots$\\
$n-1$ & & & &  & &$\circ$&$\times$&$\times$&$\times$&$\cdots$\\
$n$     & & & &  & & &$\circ$&$\times$&$\times$&$\cdots$\\
$n+1$     & & & &  & & & &$\circ$&$\times$&$\cdots$\\
$\vdots$  & & & &  & & & & &$\ddots$ &$\ddots$
\end{tabular}
\end{center}
\end{table}
%
\vsp

If $z_1=0$, $f(\xi)$ is a rational function with degree $n-1$ numerator and degree $n-1$ denominator. The characterization theorem of rational function (Theorem 1.4.4) in \cite{Baker} implies that $[L/M](-\xi)=f(\xi)$ if $L\ge n-1$ and $M\ge n-1$. This takes care of $L-M+1\ge 1$ and $M\ge n-1$, represented by  the triangular region marked by $\times$ in Table \ref{large_L_M}. 
For $L-M+1=0$ and $M\le n-1$, the coefficient matrix in (\ref{Pade_eqns_b}) can be decomposed in the same fashion as in (\ref{decomp_1}) with $\mathbf{\Lambda}$ replaced by
\beq
\mathbf{\Lambda}:=\left[
\begin{array}{lllll}
\lambda_1 &0&0&\cdots&0\\
0& \lambda_2 &0&\cdots&0\\
0&0& \lambda_3 &\cdots&0\\
\vdots&\vdots & \vdots & \ddots &\vdots\\
0&0&0&\cdots& \lambda_n 
\end{array}
\right]_{n\times n}
\label{special_Lambda}
\eeq   
By an argument exactly the same as for the $z_1\ne 0$ case, the existence of the $[L/M]$ Pad\'{e} approximant for $L-M+1=0$ and $M\le n-1$ is proved; they correspond to the line formed by $\circ$ in Table \ref{large_L_M}. For $L-M+1>0$ and $M\le n-1$, the coefficient matrix of (\ref{Pade_eqns_b}) is
\beq
\left[
\begin{array}{llll}
\DIS\sum_{i=2}^n \lambda_i z_i^{L-M+1} & \DIS\sum_{i=2}^n \lambda_i z_i^{L-M+2} & \cdots & \DIS\sum_{i=2}^n \lambda_i z_i^{L}\\
\DIS\sum_{i=2}^n \lambda_i z_i^{L-M+2}& \DIS\sum_{i=2}^n \lambda_i z_i^{L-M+3}& \cdots &\DIS\sum_{i=2}^n \lambda_i z_i^{L+1}\\
\vdots & \vdots & \ddots &\vdots\\
\DIS\sum_{i=2}^n \lambda_i z_i^{L} & \DIS\sum_{i=2}^n \lambda_i z_i^{L+1} & \cdots & \DIS\sum_{i=2}^n \lambda_i z_i^{L+M-1}
\end{array}
\right]
={\mathbf{V}}^T \mathbf{\Lambda} \mathbf{V},
\label{decomp_2}
\eeq   
where
\beq
\mathbf{V}:=\left[
\begin{array}{lllll}
1&z_2&z_2^2&\cdots&z_2^{M-1}\\
1&z_3&z_3^2&\cdots&z_3^{M-1}\\
\vdots&\vdots & \vdots & \ddots &\vdots\\
1&z_M&z_M^2&\cdots& z_M^{M-1}\\
\vdots&\vdots & \vdots & \ddots &\vdots\\
1&z_n&z_n^2&\cdots& z_n^{M-1}
\end{array}
\right]_{(n-1)\times M}\label{special_V}\\
\mathbf{\Lambda}:=\left[
\begin{array}{lllll}
\lambda_2 z_2^{L-M+1}&0&0&\cdots&0\\
0& \lambda_3 z_3^{L-M+1}&0&\cdots&0\\
0&0& \lambda_4 z_4^{L-M+1}&\cdots&0\\
\vdots&\vdots & \vdots & \ddots &\vdots\\
0&0&0&\cdots& \lambda_n z_n^{L-M+1}
\end{array}
\right]_{(n-1)\times (n-1)}
\label{special_Lambda2}
\eeq
Since $\mathbf{V}$ is a Vandermonde matrix with $n-1$ rows and $M$ columns such that $n-1 \ge M$ and $z_i\ne z_j$ for $i\ne j$, the null space $\mathcal{N}(\mathbf{V})=\mathbf{0}\in \field{R}^M$. Because all the entries in $\mathbf{\Lambda}$ are positive, the existence of the $[L/M]$ Pad\'{e} approximant can be concluded in the same way as for the previous case. The corresponding region in the Pad\'{e} table is marked in Table \ref{large_L_M} by $\bullet$.
\end{proof}
We apply this result to show the existence of accuracy-through-order Pad\'{e} approximants for $G(\xi)$.

\subsection{Standard Pad\'{e} approximants for $G(\xi)$}
\begin{theorem}
The $[L/M]$ Pad\'{e} approximant for $G(\xi)$ exists for all $L-M\ge 0$ and $M\ge 0$ such that $b_0=1$.
\label{main_thm}
\end{theorem}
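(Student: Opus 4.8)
The plan is to deduce the statement for $G$ from Theorem~\ref{thm_pade_exist} by using the relation $G(\xi)=-\xi\,f(\xi)$ recorded in (\ref{def_f_xi}). Comparing the expansions (\ref{power_expan_f}) and (\ref{power_expan_G}) shows that the Taylor coefficients of $G$ at $\xi=0$ are a single shift of those of $f$: if we write $G(\xi)=\sum_{m\ge 0}g_m(-\xi)^m$ then $g_0=0$ and $g_m=\mu_{m-1}$ for $m\ge 1$. Hence, when $L-M\ge 0$, the Hankel system (\ref{Pade_eqns_b}) written for $G$ at the index $[L/M]$ involves only the coefficients $g_{L-M+1},\dots,g_{L+M-1}$, all of whose subscripts are at least $1$; replacing each $g_j$ by $\mu_{j-1}$ turns that system into exactly the system (\ref{Pade_eqns_b}) for $f$ at the index $[L-1/M]$. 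This shift-by-one of the first index is the whole mechanism of the proof, and the hypothesis $L-M\ge 0$ is precisely what keeps the reduced index inside the admissible range $(L-1)-M+1\ge 0$ of Theorem~\ref{thm_pade_exist}; were $L-M=-1$, the excluded coefficient $g_0=0$ would enter the system and the reduction would break.

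First I would dispatch the single boundary pair $(L,M)=(0,0)$: here the normalization $b_0=1$ forces the numerator constant $a_0=g_0=G(0)=0$, so $[0/0]=0$ and $G(\xi)-[0/0]=G(\xi)=O(\xi)=O(\xi^{L+M+1})$, so the approximant exists trivially. For all remaining admissible pairs $L\ge 1$, $0\le M\le L$, Theorem~\ref{thm_pade_exist} applies with first index $L-1$ (since $(L-1)-M+1=L-M\ge 0$, $L-1\ne -1$, $M\ge 0$) and yields the standard $[L-1/M]$ Pad\'{e} approximant of $f$, written $R(\xi)=p(-\xi)/q(-\xi)$ with $\deg p\le L-1$, $\deg q\le M$, $q(0)=1$, and $f(\xi)-R(\xi)=O(\xi^{L+M})$. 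I would then check that $-\xi\,R(\xi)$ is the $[L/M]$ Pad\'{e} approximant of $G$: its numerator $-\xi\,p(-\xi)$ is a polynomial in $(-\xi)$ of degree at most $L$ with vanishing constant term (as it must be, since $G(0)=0$), its denominator is the unchanged $q(-\xi)$ of degree at most $M$ with $q(0)=1$, and $G(\xi)-(-\xi\,R(\xi))=-\xi(f(\xi)-R(\xi))=O(\xi^{L+M+1})$. Thus $-\xi\,R$ is a rational function of type $(L,M)$, normalized with $b_0=1$, that matches the series of $G$ to order $\xi^{L+M+1}$; by the accuracy-through-order characterization of the Pad\'{e} approximant the $[L/M]$ Pad\'{e} approximant of $G$ therefore exists, being given by $-\xi\,R$.

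I do not expect a real obstacle here; the only care points are the degree-and-order bookkeeping for the function $-\xi\,R$ and the verification that the stated range $L-M\ge 0$, $M\ge 0$ is carried onto the range of Theorem~\ref{thm_pade_exist} under the shift of the first index, together with the lone exceptional pair $(0,0)$. As a consistency check, one may note directly that the Hankel determinant $Q^{[L/M]}(0)$ formed from the coefficients of $G$ coincides, up to sign, with the one formed for $f$ at index $[L-1/M]$; when $\mu$ takes infinitely many values this is nonzero by Theorem~5.2.1 of~\cite{Baker} and existence follows at once, whereas when $\mu$ takes only finitely many values this determinant may vanish and one genuinely needs the multiplication-by-$(-\xi)$ argument above (equivalently, the rational-function characterization already invoked in the proof of Theorem~\ref{thm_pade_exist}).
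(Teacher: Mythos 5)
Your proposal is correct and follows essentially the same route as the paper: for $L\ge 1$ you take the $[L-1/M]$ approximant of $f$ supplied by Theorem~\ref{thm_pade_exist} (the index shift $(L-1)-M+1=L-M\ge 0$ being exactly what the hypothesis $L\ge M$ guarantees), multiply by $-\xi$, and verify the accuracy-through-order count $-\xi\cdot O(\xi^{L+M})=O(\xi^{L+M+1})$, handling $(L,M)=(0,0)$ separately. The extra observations about the shifted Hankel system and the degree bookkeeping are consistent with, but not needed beyond, the paper's argument.
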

\begin{proof}
For $L\ge 1$, let $[L-1/M]_f$ denote the $[L-1/M]$ Pad\'{e} approximant of $f(\xi)$ such that $(L-1)-M+1\ge 0$, $L\ge 1$ and $M\ge 0$. The proof follows immediately from the definition of $G$, Theorem \ref{f-thm} and definition of accuracy-through-order property (\ref{a-t-o-property}):
\[
G(\xi)-(-\xi)[L-1/M]_f=-\xi\left(f(\xi)-[L-1/M]_f\right)=O(\xi^{L+M+1}) \mbox{ for } |\xi|<1. 
\]
That is, $(-\xi)[L-1/M]_f$ is the $[L/M]$ Pad\'{e} approximant of $G(\xi)$ for $L\ge 1$ and $L\ge M\ge 0$. If $L=0$, then $M=0$ because of the condition $L\ge M$. The $[0/0]$ approximant of $G$ is $0$ because $G(\xi)-0=\mu_0 (-\xi)+\mu_1 (-\xi)^2+\cdots=O(\xi^{0+0+1})$. The last equality is due to the positivity of $\mu_0$.
\end{proof}
 The algorithm presented in \cite{Zh-Ch-09} for reconstructing spectral function from known measured data $\ep^*(\xi)$ for $\xi$ not necessarily in $|\xi|<1$. This can be justified by the two theorems listed below regarding properties of Pad\'{e} approximants of Stieltjes series. 
\begin{theorem}
The poles of the Pad\'{e} approximants of $f$, denoted by $[L/M]_f$, $L-M+1\ge 0$ and $M\ge 0$ are simple poles lying on $(-\infty,-1)$ with positive residues.
\label{thm_simple_pole}
\end{theorem}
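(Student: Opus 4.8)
\noindent\emph{Proof proposal.} The plan is to follow the same dichotomy as in Theorem~\ref{f-thm}: the statement is classical when $\mu$ has infinitely many points of increase, and for finitely supported $\mu$ it can be obtained from a direct analysis of the Pad\'{e} denominator that reuses the factorisation \eqref{decomp_1}. In either case I would first pass to the variable $w=-\xi$, in which $f$ is the series of Stieltjes $f=\D\int_0^1\frac{d\mu(z)}{1-zw}=\sum_{n\ge0}\mu_n w^n$ with positive moments $\mu_n$, analytic off the half-line $[1,\infty)$ since $\mathrm{supp}\,\mu\subseteq[0,1]$ sends every singularity $w=1/z$ into $[1,\infty)$. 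A pole of $[L/M]_f$ at $\xi=\xi_0$ is a pole at $w=-\xi_0$, and a residue $r$ there becomes the residue $-r$ in the $\xi$-variable; so it suffices to prove that in $w$ the poles are simple, lie in $(1,\infty)$, and carry \emph{negative} residues.

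For the infinite case this is precisely the classical Stieltjes--Markov theory of Pad\'{e} approximants of a series of Stieltjes (see \cite{Baker}, Ch.~5, and in particular Theorem~5.2.2 together with the quadrature discussion). For $L\ge M-1$ the computation $q^{[L/M]}(w)f(w)-p^{[L/M]}(w)=O(w^{L+M+1})$ shows that the reversed denominator $z^{M}q^{[L/M]}(1/z)$ is, up to a scalar, the degree-$M$ polynomial orthogonal with respect to the positive measure $z^{L-M+1}d\mu(z)$; its $M$ zeros are real, simple, and strictly inside the convex hull of $\mathrm{supp}\,\mu$, hence in $(0,1)$, so the $w$-poles are the reciprocals of these zeros and lie in $(1,\infty)$. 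The associated Gauss--Christoffel-type quadrature weights are strictly positive, and a one-line partial-fraction identity shows the $w$-residue at each pole equals $-(\text{weight})/(\text{zero})<0$. Translating back via $w=-\xi$ gives simple poles on $(-\infty,-1)$ with positive residues, as claimed.

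For the finite case, write $d\mu=\sum_{i=1}^{n}\lambda_i\delta_{z_i}$ with $\lambda_i>0$ and $0\le z_1<\dots<z_n\le1$. When $M\ge n$ (or $M\ge n-1$ if $z_1=0$) the characterisation of Pad\'{e} approximants of rational functions (\cite{Baker}, Theorem~1.4.4), already used in the proof of Theorem~\ref{f-thm}, gives $[L/M]_f=f$, whose poles are exactly $\{-1/z_i:z_i\ne0\}$ with residues $\lambda_i/z_i>0$. For $L-M+1\ge0$ and $1\le M<n$ I would run the same orthogonal-polynomial argument for the discrete $n$-atom measure: here the relevant Gram matrix is exactly the $\mathbf{V}^{T}\mathbf{\Lambda}\mathbf{V}$ appearing in \eqref{decomp_1} (and in \eqref{decomp_2} when $z_1=0$), whose positive-definiteness was already established in the proof of Theorem~\ref{f-thm}. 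That positive-definiteness is precisely what is needed to conclude existence and simplicity of the degree-$M$ orthogonal polynomial's zeros, their strict confinement to $(z_1,z_n)\subseteq[0,1)$, and positivity of the quadrature weights, after which the sign bookkeeping is identical to the infinite case.

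The step I expect to be the main obstacle is the regime $1\le M<n$ of the finite case: there $[L/M]_f\ne f$, so one cannot take the rational-characterisation shortcut and genuinely needs to identify $q^{[L/M]}$ with a reversal of the orthogonal polynomial of the discrete measure, to locate its zeros strictly inside $(z_1,z_n)$, and to track the sign of the residues through the substitution $w=-\xi$. A secondary point worth verifying is that no pole can sit at the endpoint $-1$; this reduces to the orthogonal-polynomial zeros being bounded away from $\max_i z_i$ (equivalently, to $\mathrm{supp}\,\mu$ not reaching $1$ in the cases of interest), which follows from the zeros being strictly interior to the convex hull of the support.
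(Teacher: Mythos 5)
Your proposal is correct, but it reaches the conclusion by a genuinely different route than the paper. The paper's proof adapts Baker's Theorem 5.2.1 directly: it works with the Hankel-type determinants $\triangle_M^{(J)}(x)$ of (\ref{delta}), notes $\triangle_M^{(J)}(x)=Q^{[L/M]}(-x)$, obtains the sign-alternation of consecutive determinants from the Sylvester and Frobenius identities, and then pins down the zeros by proving $\triangle_M^{(J)}(0)>0$ and $\triangle_M^{(J)}(x)\to(-\infty)^M$ as $x\to-\infty$ via the factorization $\mathbf{V}^T\mathbf{\Lambda}_x\mathbf{V}$; simplicity, location in $(-\infty,-1)$, and positivity of residues then come out of the resulting interlacing, exactly as in Baker. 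You instead identify the reversed denominator $z^Mq^{[L/M]}(1/z)$ with the monic degree-$M$ orthogonal polynomial for $z^{L-M+1}d\mu$ and invoke zero-interiority plus positivity of Gauss--Christoffel weights. Both arguments ultimately rest on positive-definiteness of the same Gram matrix ${\mathbf{V}}^T\mathbf{\Lambda}\mathbf{V}$ from (\ref{decomp_1})--(\ref{decomp_2}), which you correctly recycle from Theorem~\ref{f-thm}. The trade-off: the paper's determinantal route also delivers the interlacing of poles across consecutive $M$, which is explicitly used later (in the proof of Theorem~\ref{main_thm}) to guarantee $M$ distinct poles of $[M-1/M]$; your route gets distinctness, simplicity, and strict interiority for free and makes the residue signs more transparent, but you would still need to write out the standard Pad\'{e}--Gaussian-quadrature identity, including the polynomial part and the prefactor $w^{L-M+1}$ when $L>M-1$ (which contributes the harmless positive factor $(1/t_j)^{L-M+1}$ to each residue). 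Two small corrections: your parenthetical claiming that excluding a pole at $\xi=-1$ is ``equivalent'' to $\mathrm{supp}\,\mu$ not reaching $1$ is off --- the support may well reach $1$; what actually excludes such a pole is precisely the strict interiority of the orthogonal-polynomial zeros, which you also state. And in the case $z_1=0$ with $L-M+1\ge 1$ the effective measure $z^{L-M+1}d\mu$ has only $n-1$ atoms, so the orthogonal-polynomial argument covers $M\le n-2$ there (with $M=n-1$, $J=-1$ handled by the unweighted measure), matching the case split of Table~\ref{large_L_M}; your stated range $1\le M<n$ is fine but worth checking atom counts in each subcase.
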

\begin{proof}
For the case where the measure $\mu$ takes infinitely many different values, the proof can be found in \cite{Baker}, Theorem 5.2.1, p201. When $\mu$ takes $(n+1)<\infty$ different values, the proof given in \cite{Baker} needs to be modified as follows. The key step is to show the interlacing property of the zeros of $\triangle^{(J)}_M(x)$ defined in (\ref{delta}).
\vsp

If $M=0$, the Pad\'{e} approximant has no pole.  For $n=0,1$, the proof is the same as that for Theorem \ref{f-thm}. 
\vsp

For $n\ge 2$, we first consider the case when $z_1\ne 0$, i.e. no jump in values at $z=0$ hence $f(\xi)$ is a rational function with numerator of degree $n-1$ and denominator of degree $n$. Let $J:=L-M\ge -1$, and $M\ge 1$. Following the proof of Theorem 5.2.1 in \cite{Baker}, we define $\triangle^{(J)}_0(x)=1$; for $M=1,2,3 \cdots$, define
\beq
&&\triangle^{(J)}_M(x):=\nnu\\
&&\left|
\begin{array}{cccc}
\mu_{1+J}+x \mu_{2+J} & \mu_{2+J}+ x \mu_{3+J} & \cdots & \mu_{M+J}+x \mu_{M+J+1}\\
\mu_{2+J}+x \mu_{3+J} & \mu_{3+J}+x \mu_{4+J}& \cdots & \mu_{M+J+1}+x \mu_{M+J+2}\\
\vdots & \vdots & \ddots &\vdots\\
\mu_{M+J}+x \mu_{M+J+1} & \mu_{M+J+1}+x \mu_{M+J+2}& \cdots & \mu_{2M+J-1}+x \mu_{2M+J}\end{array}
\right|
\label{delta}
\eeq
Note that $\triangle^{(J)}_M(x)=Q^{[L/M]}(-x)$ by applying (3.2) on p.16 of \cite{Baker} to (\ref{Pade_Q}) and that $\triangle_M^{(J)}(0)=0$ for $M>n$ and $J\ge -1$. Hence the zeros of $\triangle^{(J)}_M$ correspond to the poles of Pad\'{e} approximant $[L/M]$. Unlike the proof for $n=\infty$ in \cite{Baker}, Theorem 5.2.1, we only need to consider $J\ge -1$ and $1\le M< n$ because for $J\ge -1$ and $M\ge n$, the Pad\'{e} approximant for $f(\xi)$ is itself the poles of $f(\xi)$ with finite $n$ are apparently all simple and located in $(-\infty,-1)$. Next, we need to show that the functions $\triangle_1^{(J)}(x), \cdots, \triangle_{n-1}^{(J)}(x)$ have the property such that if $\triangle_M^{(J)}(x)=0$ for some $x$, then $\triangle_{M-1}^{(J)}(x)$ and $\triangle_{M+1}^{(J)}(x)$ have opposite signs for all $1\le M \le n-1$. This can be shown by using Sylvester's identity and Frobenius identity in exactly the same way as in the proof in  \cite{Baker}, Theorem 5.2.1. In order to show the interlacing property of the zeros of $\triangle_M^{(J)}(x)$,  we need to prove the following properties 
\beq
\left.
\begin{array}{rcl}
\triangle_M^{(J)}(0)&>&0\\
\triangle_M^{(J)}(-\infty)&=&(-1)^M\cdot\infty
\end{array}
\right\} \mbox{ for $M=1,\cdots, n$}
\label{interlacing}
\eeq
by first noting that $\triangle_M^{(J)}(0)$ is the determinant of the matrix ${\mathbf{V}}^T \mathbf{\Lambda} \mathbf{V}$ in (\ref{decomp_1}) and 
\[
({\mathbf{V}}^T \mathbf{\Lambda} \mathbf{V}\by,\by)=( \mathbf{\Lambda} \mathbf{V}\by,\mathbf{V}\by )\ge \min_{j=1,\cdots,n} \lambda_j z_j^{L-M+1}(\mathbf{V}\by,\mathbf{V}\by ) >0, \mbox{ for } \by \in \field{R}^M\setminus \{\mathbf{0}\}
\]
So $\det({\mathbf{V}}^T \mathbf{\Lambda} \mathbf{V})>0$, hence $\triangle_M^{(J)}(0)>0$ for $M=0,1,\cdots, n$. For $\triangle_M^{(J)}(-\infty)$, we decompose the matrix corresponding to $\triangle_M^{(J)}(x)$ as 
\[
\left[
\begin{array}{cccc}
\mu_{1+J}+x \mu_{2+J} & \mu_{2+J}+ x \mu_{3+J} & \cdots & \mu_{M+J}+x \mu_{M+J+1}\\
\mu_{2+J}+x \mu_{3+J} & \mu_{3+J}+x \mu_{4+J}& \cdots & \mu_{M+J+1}+x \mu_{M+J+2}\\
\vdots & \vdots & \ddots &\vdots\\
\mu_{M+J}+x \mu_{M+J+1} & \mu_{M+J+1}+x \mu_{M+J+2}& \cdots & \mu_{2M+J-1}+x \mu_{2M+J}\end{array}
\right]
={\mathbf{V}}^T \mathbf{\Lambda_x} \mathbf{V}
\]
with 
\[
\mathbf{\Lambda}_x:=\left[
\begin{array}{llll}
\lambda_1 z_1^{L-M+1}(1+z_1 x)&0&\cdots&0\\
0& \lambda_2 z_2^{L-M+1}(1+z_2 x)&\cdots&0\\
\vdots&\vdots &  \ddots &\vdots\\
0&0&\cdots& \lambda_n z_n^{L-M+1}(1+z_n x)
\end{array}
\right]_{n\times n}
\]
Similar as before, for a fixed $x$, there exists $A$ and $B$ such that
\[
A (\mathbf{V}\by,\mathbf{V}\by )\ge ({\mathbf{V}}^T \mathbf{\Lambda}_x \mathbf{V}\by,\by)=( \mathbf{\Lambda}_x \mathbf{V}\by,\mathbf{V}\by )\ge  B(\mathbf{V}\by,\mathbf{V}\by ) , \mbox{ for } \by \in \field{R}^M\setminus \{\mathbf{0}\}
\]
where
\beqs
A:=\max_{j=1,\cdots,n} \lambda_j z_j^{L-M+1}(1+z_j x),\,\,
B:=\min_{j=1,\cdots,n} \lambda_j z_j^{L-M+1}(1+z_j x)
\eeqs
As $x\goto -\infty$, we have $A\goto -\infty$ and $B\goto -\infty$. Hence $\triangle_M^{(J)}(x)$=$\det(\mathbf{V}^T \mathbf{\Lambda}_x \mathbf{V}) \goto (-\infty)^M$ as $x\goto -\infty$. This completes the proof of (\ref{interlacing}) for $z_1\ne 0$. 
\vsp

For $z_1=0$, we first consider that case when $L-M+1=0$ (i.e. $J=-1$) and $n-1\ge M\ge 1$. We have $\triangle_M^{(J)}(0)=\det({\mathbf{V}}^T \mathbf{\Lambda} \mathbf{V})$ with $\mathbf{\Lambda}$ defined in (\ref{special_Lambda}).  So $\triangle_M^{(J)}(0)>0$ for the same reason as in the $z_1\ne 0$ case. As for $\triangle_M^{(J)}(x)$ for fixed $x$, we have $\triangle_1^{(J)}(x)=\lambda_1+\D\sum^{n}_{i=2}\lambda_i (1+xz_i)\goto -\infty$ as $x\goto -\infty$ and for $M\ge 2$
\beqs
\triangle_M^{(J)}(x)=\det\left( 
\left[
\begin{array}{llll}
\lambda_1&0&\cdots&0\\
\vdots&\vdots &  \ddots &\vdots\\
0&0&\cdots& 0
\end{array}
\right]_{M\times M}
+\mathbf{V}^T \mathbf{\Lambda}_x \mathbf{V}
\right)
\eeqs
with $\mathbf{V}$ being the one defined in (\ref{special_V}) and 
\beq
\mathbf{\Lambda}_x:=\left[
\begin{array}{llll}
\lambda_2 (1+z_2 x)&0&\cdots&0\\
0& \lambda_3 (1+z_3 x)&\cdots&0\\
\vdots&\vdots &  \ddots &\vdots\\
0&0&\cdots& \lambda_n (1+z_n x)
\end{array}
\right]_{(n-1)\times (n-1)}
\label{special_Vx}
\eeq
Therefore, for any $\by =(y_1, \cdots, y_M)^T\ne \mathbf{0}$, we have
\beqs
\lambda_1 y_1^2+A(\mathbf{V}\by,\mathbf{V}\by)\ge
\left(\left( 
\left[
\begin{array}{llll}
\lambda_1&0&\cdots&0\\
\vdots&\vdots &  \ddots &\vdots\\
0&0&\cdots& 0
\end{array}
\right]_{M\times M}
+\mathbf{V}^T \mathbf{\Lambda}_x \mathbf{V}\right)
\by,\by\right)
\ge \lambda_1 y_1^2+B(\mathbf{V}\by,\mathbf{V}\by)
\eeqs
with $A:=\D\max_{i=2,\cdots,n} \lambda_i(1+z_i x)$ and $B:=\D\min_{i=2,\cdots,n} \lambda_i(1+z_i x)$; hence $\triangle_M^{(J)}(x)\goto (-\infty)^M$ as $x\goto -\infty$. For $L-M+1\ge 1$, $\triangle_M^{(J)}(0)=\det({\mathbf{V}}^T \mathbf{\Lambda} \mathbf{V})$ and $\triangle_M^{(J)}(x)=\det({\mathbf{V}}^T \mathbf{\Lambda}_x \mathbf{V})$ with $\mathbf{V}$ defined in (\ref{special_V}), $\mathbf{\Lambda}$ defined in (\ref{special_Lambda2}) and $\mathbf{\Lambda}_x$ defined in (\ref{special_Vx}) and the same argument can be applied for proving (\ref{interlacing}). The rest of the proof is the same as that in \cite{Baker}, Theorem 5.2.1.
\end{proof}
This theorem implies the following theorem:
%
\begin{theorem}[\cite{Baker}, Theorem 5.4.4, p225]
Any sequence of $[L_k/M_k]$ Pad\'{e} approximants, $L_k-M_k+1\ge 0$, of a Stieltjes series $f(\xi)$ convergent in $|\xi|<1$ converges uniformly to $f(\xi)$ in $\mathcal{D}^+(\triangle)$, where $\mathcal{D}^+(\triangle)$ is the set of all points in $|\xi|<R$ which are at least $\triangle$ distance away from the cut $(-\infty,-1]$ for any positive numbers $R$ and $\triangle$.
\label{thm_uniform_conv}  
\end{theorem}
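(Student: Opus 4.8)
The plan is to derive the result exactly as in~\cite{Baker}, from the two structural facts now established for the Pad\'e approximants of a Stieltjes series: their existence (Theorems~\ref{thm_pade_exist} and~\ref{f-thm}) and the fact that their poles are simple, lie in $(-\infty,-1)$, and have positive residues (Theorem~\ref{thm_simple_pole}). I would treat the paradiagonal approximants $[M-1/M]_f$ first and then reduce the general case $L-M+1\ge0$ to them. By Vitali's theorem it is enough to prove (i) that the family $\{[L_k/M_k]_f\}$ is locally uniformly bounded on the cut plane $\field{C}\setminus(-\infty,-1]$ and (ii) that it converges on some subset of that domain having an accumulation point; the limit is then automatically $f$ and the convergence is locally uniform on $\field{C}\setminus(-\infty,-1]$, hence uniform on the compact set $\overline{\mathcal{D}^+(\triangle)}$.

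For (i) I would put $[M-1/M]_f$ in partial fractions. By Theorem~\ref{thm_simple_pole} it is a proper rational function $\sum_j r_j/(\xi-\xi_j)$ with $\xi_j\in(-\infty,-1)$ and $r_j>0$; setting $z_j:=-1/\xi_j\in(0,1)$ yields $[M-1/M]_f(\xi)=\int_0^1(1+z\xi)^{-1}\,d\nu_M(z)$ with $\nu_M$ the positive discrete measure $\sum_j r_jz_j\,\delta_{z_j}$, and evaluating at $\xi=0$ together with the accuracy-through-order normalization $[M-1/M]_f(0)=\mu_0$ forces $\nu_M([0,1])=\mu_0$ for all $M$. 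On any compact $K\subset\field{C}\setminus(-\infty,-1]$ the continuous function $(z,\xi)\mapsto|1+z\xi|$ on $[0,1]\times K$ is bounded below by a positive constant $\delta_K$, since $1+z\xi$ vanishes only at $\xi=-1/z\in(-\infty,-1]$; hence $|[M-1/M]_f(\xi)|\le\mu_0/\delta_K$ on $K$, uniformly in $M$.

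For (ii) I would use accuracy-through-order (Theorem~\ref{f-thm}), which gives $f-[M-1/M]_f=O(\xi^{2M})$ and hence that the first $2M$ moments of $\mu-\nu_M$ vanish, so that for $|\xi|<1$
\[
f(\xi)-[M-1/M]_f(\xi)=\sum_{k\ge 2M}(-\xi)^k\Bigl(\mu_k-\int_0^1 z^k\,d\nu_M(z)\Bigr);
\]
since each moment of $\mu$ and of $\nu_M$ is at most $\mu_0$, this is bounded by $2\mu_0|\xi|^{2M}/(1-|\xi|)$, so $[M-1/M]_f\to f$ uniformly on $\{|\xi|\le r\}$ for every $r<1$. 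Combining (i), (ii) and Vitali's theorem proves the statement for the paradiagonal sequence. For general $L-M+1=:j$ I would use the identity $[L/M]_f(\xi)=\sum_{n=0}^{j-1}\mu_n(-\xi)^n+(-\xi)^j[M-1/M]_{\tilde f}(\xi)$, where $\tilde f$ is the Stieltjes function with measure $z^j\,d\mu$ (which has no more points of increase than $\mu$, so Theorems~\ref{f-thm}--\ref{thm_simple_pole} apply to it); this gives $f-[L/M]_f=(-\xi)^j(\tilde f-[M-1/M]_{\tilde f})$, which settles every sequence with $L_k-M_k$ bounded --- in particular the near-diagonal sequences used in~\cite{Zh-Ch-09} --- and, for unbounded $L_k-M_k$, at least convergence on the disk $|\xi|<1$ where the series itself converges. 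When $\mu$ has only finitely many points of increase the argument is needed only until $M_k$ reaches that number, beyond which $[L_k/M_k]_f\equiv f$ by Theorem 1.4.4 of~\cite{Baker}.

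The crux is (i): the bound on $|[M-1/M]_f|$ must be uniform in $M$, which forces one to keep $|1+z\xi|$ away from zero all the way up to $z\to1^-$, where the relevant pole $-1/z$ tends to the endpoint $-1$ of the cut --- this is exactly why the statement involves the distance $\triangle$ to the whole cut. The ingredient that makes this work is the positivity of the residues supplied by Theorem~\ref{thm_simple_pole} (not merely accuracy-through-order), which pins $\nu_M([0,1])$ at $\mu_0$; this is precisely the place where the extension of Theorem~\ref{thm_simple_pole} to finitely-valued $\mu$ proved above enters.
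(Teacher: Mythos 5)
The paper does not actually prove this statement: it is imported verbatim from \cite{Baker} (Theorem 5.4.4) and used as a black box, so there is no in-paper argument to compare yours against. What you have written is a correct, self-contained reconstruction of the standard proof for series of Stieltjes, and it correctly identifies the one place where the paper's own contribution enters: positivity of the residues and location of the poles (Theorem \ref{thm_simple_pole}, including its extension to measures with finitely many points of increase) is what turns $[M-1/M]_f$ into the Stieltjes transform of a discrete positive measure $\nu_M$ of total mass $\mu_0$, which yields the $M$-uniform bound $\mu_0/\delta_K$ on compacta of the cut plane; accuracy-through-order gives convergence on $|\xi|\le r<1$; and Vitali upgrades this to locally uniform convergence on $\field{C}\setminus(-\infty,-1]$, hence uniform convergence on the compact closure of $\mathcal{D}^+(\triangle)$. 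Your reduction of $[L/M]$ with $j=L-M\ge0$ to $[M-1/M]$ of the shifted measure $z^j\,d\mu$ is also sound: the candidate $\sum_{n<j}\mu_n(-\xi)^n+(-\xi)^j[M-1/M]_{\tilde f}$ has numerator degree $\le L$, denominator degree $M$, and error $O(\xi^{L+M+1})$, so it is the $[L/M]$ approximant by uniqueness of accuracy-through-order approximants. Baker's own proof differs only cosmetically (he obtains pointwise convergence from monotone bracketing on the positive real axis rather than from the Taylor tail estimate on $|\xi|\le r$), so the two routes are essentially the same.

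The residual issue you flag for unbounded $L_k-M_k$ is real, but it is a defect of the theorem as transcribed in the paper rather than of your argument: read literally, ``any sequence with $L_k-M_k+1\ge0$'' includes $M_k\equiv0$, i.e.\ the Taylor partial sums, which fail to converge on the portion of $\mathcal{D}^+(\triangle)$ outside the unit disk whenever the support of $\mu$ reaches $z=1$. Baker's hypothesis in effect requires $M_k\to\infty$, and your polynomial-plus-shifted-measure decomposition does not by itself control the case $j_k\to\infty$ at $|\xi|>1$ (the polynomial prefix diverges there), so a full proof of Baker's statement for arbitrary ray sequences would need his finer error formula. For the purposes of this paper, however, only the sequences $[M-1/M]$ and $[M/M]$ are ever used, and for these your proof is complete.
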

It other words, even though the Pad\'{e} approximants are constructed from the Stieltjes series convergent only in $|\xi|<1$, their validity of approximating the Stieltjes function $f(\xi)$, which is analytic everywhere outside $(-\infty, -1]$, extends far beyond $|\xi|<1$.
\vsp

In \cite{Ch-Zh-09}, Cherkaev et. al. define $S_N$ equivalence of dielectric composites with isotropic constituents. According to their definition, two microstructures are $S_N$-equivalent if the the first $N$ moments ($\mu_0$ to $\mu_{n-1}$) of the spectral measures $\mu_{ik}^{(1)}$ and $\mu_{ik}^{(2)}$ in the IRF (\ref{F_function}) are identical. In the following, we show that Pad\'{e} approximants provide a characterization of $S_N$-equivalent structures in the following sense:
\begin{theorem} {\rm Let $G_1(\xi)$ and $G_2(\xi)$ be two Stieltjes functions defined as in (\ref{def_f_xi}) by the effective complex dielectric constant of Structure-1 and Structure-2, respectively. Denote the corresponding $[L/M]$ Pad\'{e} approximant by $[L/M]_{G_1}$ and $[L/M]_{G_2}$. Then Structure-1 and Structure-2 are $S_N$ equivalent, $N\ge 1$, if and only if $[L/M]_{G_1}=[L/M]_{G_2}$ for all $L+M\le N$ such that $L\ge M\ge 0$.}
\end{theorem}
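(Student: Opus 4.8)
The plan is to reduce the statement to the elementary fact that the $[L/M]$ Pad\'e approximant of $G$ is manufactured out of only finitely many moments. First I would recall from the proof of Theorem~\ref{main_thm} that for $L\ge 1$ and $L\ge M\ge 0$ one has $[L/M]_{G}=(-\xi)\,[L-1/M]_f$, while $[0/0]_{G}=0$, and that $[L-1/M]_f$ is obtained by solving the Pad\'e systems (\ref{Pade_eqns_b}) and (\ref{Pade_eqns_a}) with $L$ replaced by $L-1$. Inspecting the indices, the coefficient matrix and the right-hand side of (\ref{Pade_eqns_b}) for $[L-1/M]_f$ involve only $\mu_{L-M},\dots,\mu_{L+M-1}$ (with the convention $\mu_j=0$ for $j<0$), and the triangular system (\ref{Pade_eqns_a}) then determines $a_0,\dots,a_{L-1}$ from $\mu_0,\dots,\mu_{L-1}$ together with the $b_k$'s. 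Since Theorem~\ref{f-thm} guarantees that $[L-1/M]_f$ exists and since the accuracy-through-order approximant is unique when it exists, we conclude that $[L/M]_{G}$ is a well-defined function of the moments $\mu_0,\dots,\mu_{L+M-1}$ alone.

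Granting this, the forward implication is immediate. If Structure-$1$ and Structure-$2$ are $S_N$-equivalent, their moments agree through $\mu_{N-1}$. For any admissible pair $(L,M)$ with $L\ge M\ge 0$ and $L+M\le N$ we have $L+M-1\le N-1$, so $[L/M]_{G_1}$ and $[L/M]_{G_2}$ are computed from the same data $\mu_0,\dots,\mu_{L+M-1}$ and therefore coincide.

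For the converse I would single out the extreme admissible member $(L,M)=(N,0)$, which is legitimate because $N\ge 1$, $N\ge 0$ and $N+0\le N$. By the formula above $[N/0]_{G}=(-\xi)\,[N-1/0]_f$, and $[N-1/0]_f=\sum_{k=0}^{N-1}\mu_k(-\xi)^k$ as noted in the excerpt, so $[N/0]_{G}=\sum_{k=0}^{N-1}\mu_k(-\xi)^{k+1}$. Equating the polynomials $[N/0]_{G_1}$ and $[N/0]_{G_2}$ and comparing the coefficients of $(-\xi)^{k+1}$ for $k=0,\dots,N-1$ forces $\mu_k^{(1)}=\mu_k^{(2)}$ for all such $k$, which is exactly $S_N$-equivalence. (If desired one can instead run the comparison through the whole diagonal $[L/L]$ or any other admissible family, but a single approximant already suffices.)

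The only point demanding genuine care is the index bookkeeping in the first paragraph: verifying that the Pad\'e systems for $[L-1/M]_f$ really do close up using moments no later than $\mu_{L+M-1}$, and, more importantly, that the combination of existence (Theorems~\ref{f-thm} and~\ref{main_thm}) with uniqueness of the accuracy-through-order approximant upgrades the observation ``one representation of $[L/M]_{G}$ depends only on $\mu_0,\dots,\mu_{L+M-1}$'' into the assertion ``$[L/M]_{G}$ itself is a function of $\mu_0,\dots,\mu_{L+M-1}$''. Everything past that is a routine matching of polynomial coefficients.
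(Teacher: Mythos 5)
Your proposal is correct. The forward direction is essentially the paper's: both arguments rest on the observation that the $[L/M]$ approximant of $G$, once it is known to exist (Theorem \ref{main_thm} for $G$, or equivalently $[L/M]_G=(-\xi)[L-1/M]_f$ plus Theorem \ref{f-thm}), is determined by the Taylor coefficients of $G$ through order $L+M$, i.e.\ by $\mu_0,\dots,\mu_{L+M-1}$; your index bookkeeping through (\ref{Pade_eqns_b})--(\ref{Pade_eqns_a}) checks out, and the cleanest way to close the "well-defined function of the moments" claim in the degenerate (singular-system) cases is exactly the uniqueness-of-ATO argument you allude to: if $\tilde G$ shares the first $L+M+1$ Taylor coefficients with $G$, any ATO approximant of $G$ is one of $\tilde G$ and conversely, and the ATO approximant is unique when it exists.

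Where you genuinely diverge is the converse. The paper takes an arbitrary pair with $L+M=N$, invokes Theorem \ref{thm_simple_pole} to place all poles of $[L/M]_{G}$ on $(-\infty,-1)$ so that the rational function can be re-expanded in $|\xi|<1$, and then reads the moments off via accuracy-through-order. You instead pick the single admissible pair $(L,M)=(N,0)$, for which $[N/0]_{G}=\sum_{k=0}^{N-1}\mu_k(-\xi)^{k+1}$ is literally the truncated moment series, and compare polynomial coefficients. This is legitimate ($(N,0)$ satisfies $N\ge 0$ and $N+0\le N$, so it is covered by the hypothesis) and buys you a strictly more elementary argument: no appeal to the pole-location theorem or to re-expansion of a rational function is needed, and it makes transparent that the hypothesis is only needed for one approximant per value of $N$. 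The paper's route, by contrast, shows the slightly stronger fact that equality of any single $[L/M]$ with $L+M=N$ and $M\ge 1$ already forces $S_N$-equivalence, which is information your shortcut does not recover. Both proofs are valid.
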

\begin{proof}
Since the coefficients of $(-\xi)^{n+1}$, $n\ge 0$, in the power series expansions of $G_1$ and $G_2$ (\ref{power_expan_G}) are given by the $n^{th}$ moment of the corresponding measure, $S_N$-equivalence implies that the power series expansions near $\xi=0$ of $G_1$ and $G_2$ have the same first $N+1$ terms (the constant term in the series is $0$ for both $G_1$ and $G_2$ ). Since the $[L/M]$ Pad\'{e} approximant is completely determined by coefficients up to $(-\xi)^{L+M}$, clearly $[L/M]_{G_1}=[L/M]_{G_2}$ if $L+M\le N$ and if they both exist. By Theorem \ref{main_thm}, we conclude that $[L/M]_{G_1}=[L/M]_{G_2}$ for $L+M\le N$ and $L\ge M\ge 0$.
\vsp

Conversely, suppose $[L/M]_{G_1}(-\xi)=[L/M]_{G_2}(-\xi)$ for $L+M=N$. Due to Theorem \ref{thm_simple_pole} all the poles of Pad\'{e} approximants for Stieltjes series convergent in $|\xi|<1$ lie on the real axis in the interval $ -\infty<\xi<-1$, the power series expansion of this rational function 
\beqs
[L/M]_{G_1}(-\xi)=[L/M]_{G_2}(-\xi)=\DIS\sum_{n=0}^{\infty} c_n^{[L/M]}(-\xi)^n.
\eeqs
is valid for $|\xi|< 1$. By accuracy-through-order property of the Pad\'{e} approximants, $c_n^{[L/M]}=\mu_{n-1}^{(1)}$ and $c_n^{[L/M]}=\mu_{n-1}^{(2)}$ for $n=1\sim N$ with $\mu_n^{(1)}$ and $\mu_n^{(2)}$ being the coefficients of $G_1$ and $G_2$, respectively, as defined in (\ref{power_expan_G}). Hence the theorem is proved.
\end{proof}
%
\section{\label{main}Existence of nonstandard Pad\'{e} approximants for $F(s)$}
The main result of this paper is to show the existence of nonstandard Pad\'{e} approximants in (\ref{F_pade}).
\begin{theorem}
\label{main_thm}
For any Stieltjes function
\beq
F(s)=\int_0^1 \frac{1}{s-z} d\mu(z)
\label{main_F}
\eeq
with non-decreasing spectral function $\mu(z)$ taking either infinitely or finitely many different values for $z\in[0,1]$, there exits nonstandard Pad\'{e} approximant of the form
\beq
F(s)\simeq\frac{a(s)}{b(s)}=\frac{a_0 +a_1s +a_2s^2 + \cdot\cdot\cdot +a_{M-1}s^{M-1}}
         {b_0 +b_1 s +b_2s^2 + \cdot\cdot\cdot + b_Ms^M},\, b_1\ne 0, M\ge 1.
\label{main_Pade}
\eeq  
\end{theorem}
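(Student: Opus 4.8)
The plan is to pull the whole question back to the auxiliary function through the substitution $\xi=-1/s$ of \ref{def_f_xi}, invoke the existence results already established for $f(\xi)$ and $G(\xi)$, and then read off the coefficient $b_1$ from the (already known) location of the poles.

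\emph{Step 1: reduction to $G(\xi)$.} Since $F(s)=G(-1/s)$ with $G(\xi)=-\xi f(\xi)$, recall that for every $M\ge1$ the $[M/M]$ Pad\'e approximant of $G$ exists, equals $(-\xi)[M-1/M]_f$, and hence has the form $P(\xi)/Q(\xi)$ with $Q(0)=1$, $\deg Q\le M$, $\deg P\le M$ and $P(0)=0$. Substituting $\xi=-1/s$ and clearing the factor $s^M$ turns $P(\xi)/Q(\xi)$ into $a(s)/b(s)$ with $\deg a\le M-1$ and $b(s)=s^M Q(-1/s)$ of degree exactly $M$ and leading coefficient $1$ (since $Q(0)=1$); moreover $G(\xi)-P/Q=O(\xi^{2M+1})$ becomes $F(s)-a(s)/b(s)=O(s^{-(2M+1)})$, i.e.\ the first $2M=p+q+1$ coefficients of the expansion $F(s)=\sum_m\mu_m s^{-m-1}$ are matched. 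So $a(s)/b(s)$ is an accuracy-through-order Pad\'e approximant of $F$ of the form \ref{main_Pade}, and everything reduces to showing $b_1\ne0$.

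\emph{Step 2: $b_1\ne0$ once the denominator has degree $\ge M-1$.} Writing $Q$ as a product over its roots $\xi_j$, which by Theorem \ref{thm_simple_pole} are simple and lie in $(-\infty,-1)$, one gets $b(s)=s^{\,M-\deg Q}\prod_j(s-s_j)$ with $s_j:=-1/\xi_j$ distinct points of $(0,1)$. Hence, provided $\deg Q\ge M-1$: if $\deg Q=M$ then $b(s)=\prod_{j=1}^M(s-s_j)$ and $b_1=(-1)^{M-1}\sum_j\prod_{k\ne j}s_k$, a sum of strictly positive numbers; if $\deg Q=M-1$ then $b(s)=s\prod_{j=1}^{M-1}(s-s_j)$ and $b_1=(-1)^{M-1}\prod_{j=1}^{M-1}s_j$. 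Either way $b_1\ne0$.

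\emph{Step 3: supplying an admissible $M$.} It suffices to exhibit one $M$ for which $\deg Q\ge M-1$. If $\mu$ takes infinitely many values, the leading Hankel determinant of $[M-1/M]_f$ is positive for every $M$ (positivity of the moment matrices of a measure with infinite support; cf.\ \cite{Baker}, Theorem~5.2.1), so $\deg Q=M$ and Step~2 applies for all $M\ge1$. If $\mu$ takes $n+1$ values, with jumps $\lambda_i>0$ at $0\le z_1<\cdots<z_n$ (the case $n=0$ being trivial since then $F\equiv0$), take $M=n$: then $f(\xi)=\sum_{i=1}^n\lambda_i/(1+z_i\xi)$ is rational with numerator degree $\le n-1$ and denominator degree $\le n$, so $[M-1/M]_f=f$ by the characterization theorem for rational functions (\cite{Baker}, Theorem~1.4.4), $a(s)/b(s)=F(s)$ identically, $b(s)=\prod_{i=1}^n(s-z_i)$, and $b_1=(-1)^{n-1}\sum_i\prod_{j\ne i}z_j$ — a sum of nonnegative terms, positive because the $z_i$ are distinct (if $z_1=0$ only $z_2\cdots z_n$ survives) — so $b_1\ne0$. (More is true: for $1\le M<n$ the coefficient matrix of the Pad\'e system \ref{Pade_eqns_b} for $[M-1/M]_f$, and its leading Hankel minor, are both nonsingular by the product decomposition $\mathbf{V}^{T}\mathbf{\Lambda}\mathbf{V}$ already used in the proofs of Theorems \ref{f-thm} and \ref{thm_simple_pole}, so $\deg Q=M$ and Step~2 applies there too; thus in fact every $M\ge1$ works.)

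\emph{Expected main obstacle.} The sign bookkeeping of Step~2 is routine; the real content is the guarantee that the denominator of the relevant Pad\'e approximant has full degree $M$ (equivalently, that the approximant carries its full complement of $M$ simple poles), since this is exactly what makes the nonstandard normalization $b_1=1$ legitimate rather than an overdetermined constraint. This is classical for Stieltjes functions with infinitely many points of increase, but for the rational (finitely-many-values) case it must be checked by hand — and that is precisely where the product factorization of the Pad\'e/Hankel matrices enters.
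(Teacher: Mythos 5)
Your proof is correct and follows essentially the same route as the paper's: pull everything back to the auxiliary function via $\xi=-1/s$, invoke the existence and simple-pole theorems for $[M-1/M]_f$, and read off $b_1$ as an elementary symmetric function of the distinct, nonzero poles $s_j=-1/\xi_j\in(0,1)$. Your explicit treatment of the $\deg Q=M-1$ possibility and of the $M=n$ finite-measure case (where the paper simply says the form holds ``apparently'') is, if anything, slightly more careful than the original.
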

\begin{proof}
We first consider the case in which $\mu(z)$ takes infinitely many different values on $[0,1]$. Let $L= M-1$ and $M\ge 1$. According to Theorem \ref{thm_pade_exist} and Theorem \ref{thm_simple_pole}, the $[M-1/M]$ Pad\'{e} approximant of $f(\xi)$ exists with simple poles lying on $\xi\in(-\infty,-1)$ with positive residues. The interlacing-of-zero property of $\triangle_M^{(J)}(x)$ guarantees the existence of $M$ different poles of $[M-1/M]$. By Theorem \ref{thm_simple_pole}, it can be further expressed in terms of the partial fraction decomposition
\beq
[M-1/M]_f=\DIS\frac{P(-\xi)}{Q(-\xi)}=\DIS\sum_{i=1}^M \frac{\lambda_i}{\xi-p_i}, \, p_i \in (-\infty,-1),\, p_i\ne p_j \nnu\\\mbox{ for } i\ne j,\,\lambda_i>0.
\label{f_M}
\eeq
This induces the $[M/M]$ approximant of $G(\xi)$:
\beq
[M/M]_G=-\xi\cdot[L/M]_f=\DIS\sum_{i=1}^M \frac{-\xi\cdot\lambda_i}{\xi-p_i}, \, p_i \in (-\infty,-1),\,\lambda_i>0.
\eeq
We rewrite the expression above in terms of variable $s:=\frac{-1}{\xi}\in[0,1)$
\beq
[M/M]_G=\DIS\sum_{i=1}^M \frac{-\xi\cdot\lambda_i}{\xi-p_i}=\DIS\sum_{i=1}^M \frac{-\lambda_i/p_i}{s-(-1/p_i)}.
\label{pole_exp}
\eeq
Noting $\frac{-1}{p_i}\in[0,1)$ and $F(s)=G(\xi)$, the partial fraction representation above implies there exists a rational function approximation of $F(s)$ with the following form 
\[
F(s)\simeq\frac{a(s)}{b(s)}=\frac{a_0 +a_1s +a_2s^2 + \cdot\cdot\cdot +a_{M-1}s^{M-1}}
         {b_0 +b_1 s +b_2s^2 + \cdot\cdot\cdot + b_Ms^{M}},  M\ge 1
\]
where $b_1\ne 0$. This is because the coefficient of $s$ in the denominator of (\ref{pole_exp}) is $\sum_{i=1}^M \prod_{j\ne i} \frac{1}{p_j}$, which is non-zero for $M\ge 2$. For $M=1$, this is obviously true. 
\vsp

Next, we consider the case for $\mu$ taking $(n+1)$ different values, $1 \le n< \infty$ with jumps at $1>z_1,\dots, z_n \ge 0$ of magnitude $\lambda_1,\dots, \lambda_n>0$. Then
\beq
F(s)=\sum_{i=1}^n \frac{\lambda_i}{s-z_i}
\label{main_finite_F}
\eeq
For $M \ge n$, the $[M-1/M]$ Pad\'{e} approximant of $F(s)$ is itself and apparently (\ref{main_finite_F}) has the form (\ref{main_Pade}). For $M<n$, the proof is the same as in the case where $\mu$ takes infinitely many different values because of the validity of Theorem \ref{thm_pade_exist} and Theorem \ref{thm_simple_pole} for this case.
\end{proof}
\section{\label{conclusion}Conclusion and discussion}
In this paper, we prove that a Stieltjes function of the form (\ref{main_F}) can be approximated by $[M-1/M]$ non-standard Pad\'{e} approximant (\ref{main_Pade}). Most of the work in this paper is dedicated to the case where the spectral measure $\mu$ takes finitely many different values in $[0,1]$ because the proof for $\mu$ that takes infinitely many different values in $[0,1]$ can be obtained by results that are already available in the literature, as indicated in the theorems presented in this paper. Most results in the literature concern about the convergence of $[L/M]$ Pad\'{e} approximants to a Stieltjes function as $L$ and/or $M\goto \infty$. Naturally, they exclude those functions whose spectral function $\mu$ takes finitely many different values due to the fact that these functions are rational functions and $[L/M]$ Pad\'{e} approximants for a rational function is the function itself when $L$ and $M$ are large enough; hence convergence is guaranteed. The motivation for proving the main result presented in Theorem \ref{main_thm} is due to the concerns on its application on inverse homogenization as described in the introduction. It's well known that for any rank $n$ laminate microstructure,  $F(s)$ has finite number of poles, i.e. its spectral measure takes finite number of different values. At the same time, it is also known that the checkerboard microstructure has a spectral function which takes infinitely many different values and has a jump at $z=0$. In other words, given data on effective dielectric constant of a composite, we only know that the corresponding $F(s)$ function can be represented as a Stieltjes integral but we do not know {\it{a priori}}  how many different values the spectral function takes because the microstructure itself is the unknown for the inverse problem. From the inverse problem point of view, the existence of the nonstandard Pad\'{e} approximants is as important as the convergence property. As mentioned in the introduction, in \cite{Zh-Ch-09}, the spectral measure was reconstructed  with very good precision by using data of effective dielectric constants at different frequencies. It is done by recovering the coefficients $b_0, b_2,\dots, b_M,a_0,\dots, a_M$ by reformulating the problem as minimization problem with constraints on the location of poles $s_n$ and residues $A_n$ in (\ref{constraints}).  The results in this paper also justify these constraints except $\sum_n A_n<1$ by identifying $A_n$ with $-\frac{\lambda_n}{p_n}$ and $s_n$ with $-1/p_n$ in (\ref{pole_exp}). The constraint $\sum_n A_n<1$ is justified by noticing that $\sum_i \frac{-\lambda_i}{p_i}$ is the constant term of the Maclaurin series of $[M/M-1]_f$ in (\ref{f_M}) and it has to be equal to the constant term of the Maclaurin series of $f$, which is $\mu_0$ in (\ref{power_expan_f}). $\mu_0<1$ because it is the volume fraction of one of the constituent materials.
\vsp

{\bf Acknowledgments}  The author would like to thank E. Cherkaev and D. Zhang for bringing  into her attention the problem regarding existence of nonstandard Pad\'{e} approximants for IRF. This research is partially sponsored by the ARRA-NSF Grant DMS-0920852.

\end{document}